\newtheorem{definition}{Definition}[section]
\newtheorem{theorem}[definition]{Theorem}
\newtheorem{lemma}[definition]{Lemma}
\newtheorem{proposition}[definition]{Proposition}
\theoremstyle{remark}
\newtheorem{remark}[definition]{Remark}
\begin{document}
\title{A generalization of Grothendieck's Extension Panach\'ees}
\author{Rakesh R. Pawar}
\maketitle
\abstract We formulate a generalization of the extension problem for exact sequences which was considered in~\cite{SGA 7} and give a necessary and sufficient criterion for the solution to exist. We also remark on the criterion under which such a solution is unique, if it exists. 
\begin{section}{Introduction}

Let $\mathcal{A}$ be an abelian category. All the objects and arrows below are in $\mathcal{A}$.
Given a diagram with rows and columns as short exact sequences 

\begin{equation}
\label{1}
\begin{tikzpicture}
[back line/.style={densely dotted},cross line/.style={loosely dotted}]
\matrix (m) [matrix of math nodes, row sep=2 em,column sep=2.em, text height=1.5ex, text depth=0.50ex]
 {& 0 & 0 & 0 &\\
 0 & P & E & R & 0\\
 0 & H && F & 0\\
 0 & S & G & Q & 0\\
  & 0 & 0 & 0 &\\};
\path[->,font=\scriptsize]
  (m-1-2) edge node[auto] {} (m-2-2)
  (m-1-3) edge node [auto] {} (m-2-3)
(m-1-4) edge node [auto] {} (m-2-4)
 (m-2-1) edge node [auto] {} (m-2-2)
  (m-2-2) edge node [auto] {} (m-2-3)
    edge node [auto] {} (m-3-2)
  (m-2-3) edge node [auto] {} (m-2-4)
   
   (m-2-4) edge node [auto] {} (m-2-5)
   edge node [auto] {} (m-3-4)
   (m-3-1) edge node [auto] {} (m-3-2)
  (m-3-2) edge node [auto]{} (m-4-2)  
  (m-3-4) edge node [auto] {} (m-3-5) 
              edge node [auto] {} (m-4-4)   
  (m-4-1) edge node [auto] {} (m-4-2) 
     (m-4-2) edge node [auto]{}(m-4-3)
               edge node [auto]{}(m-5-2)
  (m-4-3) edge node [auto] {} (m-4-4)
               edge node [auto] {} (m-5-3)       
  (m-4-4) edge node [auto] {} (m-4-5)
              edge node [auto] {} (m-5-4);
\end{tikzpicture}
\end{equation}¥
we would like to complete the diagram to one with exact rows and columns :
\begin{equation}
\label{2}
\begin{tikzpicture}
[back line/.style={densely dotted},cross line/.style={loosely dotted}]
\matrix (m) [matrix of math nodes, row sep=2. em,column sep=2.em, text height=1.5ex, text depth=0.50ex]
 {& 0 & 0 & 0 &\\
 0 & P & E & R & 0\\
 0 & H & X & F & 0\\
 0 & S & G & Q & 0\\
  & 0 & 0 & 0 &\\};
 \path[->, densely dashed]  (m-2-3) edge node [auto] {} (m-3-3)
 (m-3-2) edge node [auto]{} (m-3-3)
  (m-3-3) edge node [auto] {} (m-3-4)
   edge node [auto] {} (m-4-3)   ;
\path[->,font=\scriptsize]
  (m-1-2) edge node[auto] {} (m-2-2)
  (m-1-3) edge node [auto] {} (m-2-3)
(m-1-4) edge node [auto] {} (m-2-4)
 (m-2-1) edge node [auto] {} (m-2-2)
  (m-2-2) edge node [auto] {} (m-2-3)
    edge node [auto] {} (m-3-2)
  (m-2-3) edge node [auto] {} (m-2-4)
   (m-2-4) edge node [auto] {} (m-2-5)
           edge node [auto] {} (m-3-4)
   (m-3-1) edge node [auto] {} (m-3-2)
  (m-3-2) edge node [auto]{} (m-4-2) 
  (m-3-4) edge node [auto] {} (m-3-5) 
              edge node [auto] {} (m-4-4)   
  (m-4-1) edge node [auto] {} (m-4-2) 
     (m-4-2) edge node [auto]{}(m-4-3)
               edge node [auto]{}(m-5-2)
  (m-4-3) edge node [auto] {} (m-4-4)
               edge node [auto] {} (m-5-3)       
  (m-4-4) edge node [auto] {} (m-4-5)
              edge node [auto] {} (m-5-4);
\end{tikzpicture}
\end{equation}
We will give a necessary and sufficient condition to complete this diagram. 
Note that when $S=0$ we get the \textbf{Extensions Panach\'ees} discussed by A. Grothendieck in~\cite{SGA 7} section 9.3. So this is a proper generalization of that setup. 

Let us start with the diagram \ref{1}. Since the rows and columns of that diagram are exact sequences, the exact sequence $0\to P\to E\to R\to 0$ defines an element $[E]\in Ext^1(R, P)$. Similarly we get $[H] \in Ext^1(S, P), [F] \in Ext^1(Q, R), [G] \in Ext^1(Q, S)$. 
Now $[E]\cup [F]$ is defined to be the class $[0\to P\to E\to F\to Q\to 0]$ as an element in $Ext^2(Q, P)$. Similarly have $[H]\cup [G]\in Ext^2(Q, P)$. The the addition of these elements in $Ext^2(Q, P)$ by is given by Baer sum. 

We have the following theorem:
\begin{theorem}
\label{1.1}
The following are equivalent:
\begin{enumerate}
\item[(i)] The diagram \ref{1} can be completed to diagram \ref{2}.
\item[(ii)] The Baer sum of $[E]\cup [F]$ and $[H]\cup [G]$ is 0.
¥
\end{enumerate}¥
\end{theorem}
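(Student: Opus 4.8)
My plan is to reduce the completion problem to a single lifting problem for extensions and then read off the obstruction from a connecting homomorphism. From the frame in diagram~\ref{1} I would first form two canonical objects: the pushout $E\oplus_P H$ of $E\leftarrow P\rightarrow H$ and the pullback $F\times_Q G$ of $F\rightarrow Q\leftarrow G$. These sit in short exact sequences
$$0\to P\to E\oplus_P H\to R\oplus S\to 0 \quad\text{and}\quad 0\to R\oplus S\to F\times_Q G\to Q\to 0,$$
the first because $E\oplus_P H\to R\oplus S$ has kernel $P$ and the second because $F\times_Q G\to Q$ has kernel $R\oplus S$.

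The central claim is that completions $X$ of diagram~\ref{1} correspond to extensions $0\to E\oplus_P H\to X\to Q\to 0$ whose pushout along $E\oplus_P H\twoheadrightarrow R\oplus S$ is the displayed sequence $0\to R\oplus S\to F\times_Q G\to Q\to 0$. Given a completion, the maps $E\to X$ and $H\to X$ agree on $P$ and so induce $E\oplus_P H\to X$; a diagram chase using $E=\ker(X\to G)$ and $H=\ker(X\to F)$, hence $E\cap H=\ker(X\to F\times_Q G)=P$, shows this map is monic with cokernel $Q$ and that $X/P\cong F\times_Q G$, yielding the asserted lift. Conversely, from such a lift I recover the middle row and column by composing $X\to F\times_Q G$ with the two projections and by restricting along $E,H\hookrightarrow E\oplus_P H$; checking exactness and commutativity of the resulting $3\times 3$ diagram is the step I expect to be most delicate.

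Granting this correspondence, the existence of a completion becomes the question of lifting $[F\times_Q G]\in Ext^1(Q,R\oplus S)$ along $E\oplus_P H\twoheadrightarrow R\oplus S$ to a class in $Ext^1(Q,E\oplus_P H)$. The long exact sequence of $Ext^\bullet(Q,-)$ attached to $0\to P\to E\oplus_P H\to R\oplus S\to 0$ shows that such a lift exists if and only if the connecting map $\delta\colon Ext^1(Q,R\oplus S)\to Ext^2(Q,P)$ annihilates $[F\times_Q G]$. To evaluate it, I use the splittings $Ext^1(Q,R\oplus S)=Ext^1(Q,R)\oplus Ext^1(Q,S)$, under which $[F\times_Q G]$ corresponds to $([F],[G])$ and $\delta$ decomposes as $\delta_E\oplus\delta_H$, where $\delta_E$ and $\delta_H$ are the connecting maps of $0\to P\to E\to R\to 0$ and $0\to P\to H\to S\to 0$. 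Since the connecting map of a short exact sequence is Yoneda product with its class, $\delta_E[F]=[E]\cup[F]$ and $\delta_H[G]=[H]\cup[G]$, whence $\delta[F\times_Q G]=[E]\cup[F]+[H]\cup[G]$, the Baer sum. Thus the completion exists if and only if this Baer sum vanishes.

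The two places to watch are the sign conventions—the pushout glues $P$ along the anti-diagonal, so I must verify that the identification $Ext^1(Q,R\oplus S)\cong Ext^1(Q,R)\oplus Ext^1(Q,S)$ feeds into $\delta$ with signs producing the genuine Baer sum rather than a twisted variant—and, as already flagged, the reconstruction of the full exact $3\times 3$ diagram from the bare lifting datum, which is where the real diagram-chasing work lies.
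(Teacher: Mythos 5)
Your proposal is correct and follows the paper's strategy in all essentials: your two auxiliary objects $E\oplus_P H$ and $F\times_Q G$ are exactly the paper's $W$ (the pushforward of $0\to P\oplus P\to E\oplus H\to R\oplus S\to 0$ along $\nabla$) and $Y$ (the pullback of $F\to Q\leftarrow G$), and your obstruction is the same class, the Yoneda splice $[0\to P\to W\to Y\to Q\to 0]\in Ext^2(Q,P)$. Your ``central claim'' identifying completions of diagram \ref{1} with extensions of $Q$ by $E\oplus_P H$ lifting $[F\times_Q G]$ is precisely the paper's Claim that diagram \ref{7} exists iff diagram \ref{2} exists, and the chases you sketch ($E\cap H=\ker(X\to F\times_Q G)=P$, $X/P\cong F\times_Q G$, and the reconstruction of the middle row and column from the lift) are the ones the paper carries out in full; that is indeed where the real work sits, and your outline of it is the right one. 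The two points where you genuinely diverge are modest. First, you run the long exact sequence in the second variable, reading the obstruction off as the connecting map of $Ext^\bullet(Q,-)$ applied to $0\to P\to W\to R\oplus S\to 0$ evaluated on $[Y]$, whereas the paper applies $Hom(-,P)$ to $0\to R\oplus S\to Y\to Q\to 0$ and evaluates $\delta_Y$ on $[W]$; both connecting maps produce the same element of $Ext^2(Q,P)$, so this is only bookkeeping, though the paper's choice has the side benefit of yielding the description of the solution set in Remark \ref{sol}. Second, you identify the obstruction with the Baer sum of $[E]\cup[F]$ and $[H]\cup[G]$ by bilinearity of the Yoneda product together with $i_R^{\ast}[W]=[E]$ and $i_S^{\ast}[W]=[H]$; this replaces the paper's Lemma \ref{2.4} and Proposition \ref{2.5}, which establish the same identity by an explicit commutative square of $Ext$ groups and comparison maps of four-term sequences. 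Your version is shorter where the paper is explicit, and the sign worry you flag is harmless: the pushforward along $\nabla$ glues along the antidiagonal exactly so that restricting $[W]$ to $R$ and to $S$ returns $[E]$ and $[H]$ with no twist, so the decomposition feeds into the connecting map as the genuine Baer sum.
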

The notations will be explained in the next section. The proof will follow from Propositions \ref{2.1} and \ref{2.5}.
We will also observe that under certain conditions, see Remark \ref{sol}, if the diagram \ref{1} can be completed to diagram \ref{2} then it can be done in a unique manner. 
\paragraph {Acknowledgement} 
The question considered in this paper was asked by Prof. B. Kahn and was communicated to the author by Prof. V. Srinivas. The author would like to thank Prof. V. Srinivas for subsequent discussions and guidance that led to this paper. The author would also like to thank Prof. B. Kahn for email communication that led to the formulation of the condition (ii) in the Theorem~\ref{1.1}. The author is supported by SPM fellowship (File no: SPM-07/858(0139)/2012) funded by CSIR. 
\end{section}
\begin{section}{Proof of Theorem 1.1}
\paragraph{Notations}

The strategy to prove this result is an auxiliary construction of an element $\delta_Y([W])$ in $Ext^2(Q, P)$, which will give an obstruction to complete the diagram \ref{1} to diagram \ref{2} [Prop. \ref{2.1}].  \\ 
Consider the exact sequence $0\to R\to F\to Q\to 0$. Pull back this exact sequence via the map $G\to Q$. So we get $Y$ such that it fits into this diagram with exact rows. 
\begin{equation}
\begin{tikzpicture}
[back line/.style={densely dotted},cross line/.style={loosely dotted}]
\matrix (m) [matrix of math nodes, row sep=2 em,column sep=2.em, text height=1.5ex, text depth=0.50ex]
{0 & R & Y & G & 0\\
 0 & R & F & Q & 0\\
  };

\path[->,font=\scriptsize]
 (m-1-1) edge node [auto] {} (m-1-2)
  (m-1-2) edge node [auto] {id} (m-2-2)
   (m-1-2) edge node [auto] {} (m-1-3)
  (m-1-3) edge node [auto] {} (m-1-4)
              edge node [auto] {} (m-2-3)
   (m-1-4) edge node [auto] {} (m-1-5)
           edge node [auto] {} (m-2-4)
   (m-2-1) edge node [auto] {} (m-2-2)
  (m-2-2) edge node [auto]{} (m-2-3) 
   (m-2-3) edge node [auto] {} (m-2-4)
  (m-2-4) edge node [auto] {} (m-2-5) ;
\end{tikzpicture}
\label{3}
\end{equation}
First we observe by Snake lemma applied to diagram \ref{3}, that $\exists$ $S\to Y$ such that it fits into the following diagram with exact rows and columns:
\begin{equation}
\label{4}
\begin{tikzpicture}
[back line/.style={densely dotted},cross line/.style={loosely dotted}]
\matrix (m) [matrix of math nodes, row sep=2 em,column sep=2.em, text height=1.5ex, text depth=0.50ex]
 {&  & 0 & 0 &\\
  &  & R & R & 0\\
 0 & S & Y & F & 0\\
 0 & S & G & Q & 0\\
  & 0 & 0 & 0 & \\};
\path[->,font=\scriptsize]
  (m-1-3) edge node [auto] {} (m-2-3)
(m-1-4) edge node [auto] {} (m-2-4)
  (m-2-3) edge node [auto] {id} (m-2-4)
              edge node [auto] {} (m-3-3)
   (m-2-4) edge node [auto] {} (m-2-5)
           edge node [auto] {} (m-3-4)
   (m-3-1) edge node [auto] {} (m-3-2)
  (m-3-2) edge node [auto]{id} (m-4-2) 
   edge node [auto]{} (m-3-3) 
   (m-3-3) edge node [auto] {} (m-3-4)
  edge node [auto] {} (m-4-3)  
  (m-3-4) edge node [auto] {} (m-3-5) 
              edge node [auto] {} (m-4-4)   
  (m-4-1) edge node [auto] {} (m-4-2) 
     (m-4-2) edge node [auto]{}(m-4-3)
               edge node [auto]{}(m-5-2)
  (m-4-3) edge node [auto] {} (m-4-4)
               edge node [auto] {} (m-5-3)       
  (m-4-4) edge node [auto] {} (m-4-5)
              edge node [auto] {} (m-5-4);
\end{tikzpicture}
\end{equation}
Next note that $R\to Y$ and $S\to Y$ induce $R\oplus S\to Y$ which is injective with cokernel $Q$. So we have s.e.s.:
$0\to R\oplus S\to Y\to Q\to 0$.
Apply the functor $Hom(-, P)$ to this exact sequence and get l.e.s. assocciated to it :

\begin{equation}
\label{les}
\cdots\to Hom(R\oplus S, P)\xrightarrow{\alpha} Ext^1(Q, P)\xrightarrow{\beta} Ext^1(Y, P)\xrightarrow{\gamma} Ext^1(R\oplus S, P)\xrightarrow{\delta_Y} Ext^2(Q, P).
\end{equation}
We will expand on the morphisms in this exact sequence in Remark \ref{sol}.  
Consider the following exact sequence $0\to P\oplus P\to E\oplus H\to R\oplus S\to 0.$ 
Pushforward this by $\nabla: P\oplus P\to P$ where $\nabla(p_1,p_2)=p_1+p_2$. So we get se.s. $0\to P\to W\to R\oplus S\to 0$ such that following diagram commutes with exact rows: 
\begin{equation}
\label{6}
\begin{tikzpicture}
[back line/.style={densely dotted},cross line/.style={loosely dotted}]
\matrix (m) [matrix of math nodes, row sep=1 em,column sep=2.5em, text height=1.5ex, text depth=0.50ex]
 {
 0 & P\oplus P & E\oplus H  & R\oplus S & 0\\
 0 & P & W & R\oplus S & 0\\
  };

\path[->,font=\scriptsize]
 (m-1-1) edge node [auto] {} (m-1-2)
  (m-1-2) edge node [auto] {$\nabla$} (m-2-2)
   (m-1-2) edge node [auto] {} (m-1-3)
  (m-1-3) edge node [auto] {} (m-1-4)
              edge node [auto] {} (m-2-3)
   (m-1-4) edge node [auto] {} (m-1-5)
           edge node [auto] {id} (m-2-4)
   (m-2-1) edge node [auto] {} (m-2-2)
  (m-2-2) edge node [auto]{} (m-2-3) 
   (m-2-3) edge node [auto] {} (m-2-4)
  (m-2-4) edge node [auto] {} (m-2-5) ;
\end{tikzpicture}
\end{equation}
Now $[W]\in Ext^1(R\oplus S, P)$ and $\delta_Y([W])$ is the image of $[W]$ under the connecting morphism $Ext^1(R\oplus S, P)\xrightarrow{\delta_Y} Ext^2(Q, P)$.
In the above notation we have following intermediate proposition.
\begin{proposition} 
\label{2.1}
 $\delta_Y ([W])=0$ iff we can complete the original diagram \ref{1} to \ref{2}.
\end{proposition}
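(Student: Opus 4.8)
The plan is to turn the vanishing of $\delta_Y([W])$ into a concrete lifting problem along the exact sequence \ref{les} and then match its solutions with completions of diagram \ref{1}. By exactness of \ref{les} at $Ext^1(R\oplus S, P)$, one has $\delta_Y([W]) = 0$ if and only if $[W] \in \mathrm{im}(\gamma)$, that is, there is a class $[V] \in Ext^1(Y, P)$ with $\gamma([V]) = [W]$. Since $\gamma$ is the map on $Ext^1(-, P)$ induced by the inclusion $R\oplus S \hookrightarrow Y$ coming from $0 \to R\oplus S \to Y \to Q \to 0$, the class $\gamma([V])$ is represented by the pullback of the extension $0 \to P \to V \to Y \to 0$ along $R\oplus S \hookrightarrow Y$, i.e. by the preimage of $R\oplus S$ under the surjection $V \twoheadrightarrow Y$. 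Thus the proposition reduces to producing, from each such $V$, a completion $X$ of diagram \ref{1}, and conversely.

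For the implication that $\delta_Y([W]) = 0$ yields a completion, I would take $V$ as above and set $X := V$. Recall from diagram \ref{4} that $R = \ker(Y \to G)$ and $S = \ker(Y \to F)$, while $R\oplus S = \ker(Y \to Q)$. Writing $E'$ and $H'$ for the preimages of $R$ and $S$ in $V$, the equality $\gamma([V]) = [W]$ together with the canonical splitting $Ext^1(R\oplus S, P) \cong Ext^1(R, P) \oplus Ext^1(S, P)$ and the fact that $W$ (being $\nabla_*(E\oplus H)$, see diagram \ref{6}) pulls back to $E$ over $R$ and to $H$ over $S$, forces $E' \cong E$ and $H' \cong H$ as extensions of $R$, resp. $S$, by $P$. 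Now $V/E' = Y/R = G$ and $V/H' = Y/S = F$, which furnish the middle column $0 \to E \to X \to G \to 0$ and the middle row $0 \to H \to X \to F \to 0$; the required commutativities (e.g. $E \to X \to F$ factoring as $E \to R \to F$) follow because both quotient maps $X \to F$ and $X \to G$ are induced by $V \to Y$.

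For the converse I would start from a completion $X$ and recover $V$. Using the embeddings $E \hookrightarrow X$ and $H \hookrightarrow X$ of diagram \ref{2}, the common subobject $P$ (the image of the composite $P \to E \to X$, which coincides with $P \to H \to X$) has quotient $X/P$ sitting in $0 \to R \to X/P \to G \to 0$ and in $0 \to S \to X/P \to F \to 0$. Since the bottom-right square of diagram \ref{2} commutes, the two composites $X/P \to F \to Q$ and $X/P \to G \to Q$ agree, so $X/P$ maps to the pullback $Y = F\times_Q G$; a five lemma comparison of $0 \to R \to X/P \to G \to 0$ with $0 \to R \to Y \to G \to 0$ shows this map is an isomorphism. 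Hence $[X] \in Ext^1(Y, P)$, and its pullback along $R\oplus S \hookrightarrow Y$ restricts to $E$ over $R$ and to $H$ over $S$, so it equals $[W]$; therefore $\gamma([X]) = [W]$ and $\delta_Y([W]) = 0$.

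The hard part will be bookkeeping rather than any single deep step: I expect the main obstacle to be pinning down the identifications $E' \cong E$, $H' \cong H$ (respectively $X/P \cong Y$) compatibly with all structure maps, which rests on the decomposition $Ext^1(R\oplus S, P) \cong Ext^1(R, P) \oplus Ext^1(S, P)$ and the observation that an extension of $R\oplus S$ by $P$ is determined by its restrictions to $R$ and to $S$. Once these identifications are in place, verifying the remaining commutativity squares in diagram \ref{2} is routine diagram chasing, and the biconditional follows from exactness of \ref{les}.
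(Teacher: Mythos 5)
Your proposal is correct and follows essentially the same route as the paper: both convert $\delta_Y([W])=0$ into the existence of an extension of $Y$ by $P$ whose restriction along $R\oplus S\hookrightarrow Y$ equals $[W]$ (the paper via Grothendieck's criterion from SGA~7, you via exactness of \ref{les} --- the same fact), and then identify such extensions with completions of diagram \ref{1}. The only real difference lies in how that identification is verified: the paper builds the maps $E\to X$ and $H\to X$ directly from the composites $E\to E\oplus H\to W\to X$ and checks exactness by element chases, whereas you recover $E$ and $H$ as preimages of $R$ and $S$ in $V$ and pin them down via the splitting $Ext^1(R\oplus S,P)\cong Ext^1(R,P)\oplus Ext^1(S,P)$ --- slicker bookkeeping, but the same argument in substance.
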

\begin{remark}
The proof will be given as though the abelian category $\mathcal{A}$ is embedded in the category of $R$-Modules for a commutative ring $R$. But a similar argument can be given for general abelian category using just arrows. 
\end{remark}

We will reinterpret the condition $\delta_Y ([W])=0$ as follows:

Consider
\center $\delta_Y([W])= [0\to P\to W\to R\oplus S\to 0]+[0\to R\oplus S\to Y\to Q\to 0]$ \\
= $[0\to P\to W\to Y\to Q\to 0]$

Now by Grothendieck's criterion in~\cite{SGA 7}, 9.3:  $\delta_Y([W])=0$ iff $\exists$ $X$ such that the following diagram with exact rows and columns commutes:
 \begin{equation}
 \label{7}
\begin{tikzpicture}
[back line/.style={densely dotted},cross line/.style={loosely dotted}]
\matrix (m) [matrix of math nodes, row sep=2.0 em,column sep=2.em, text height=1.5ex, text depth=0.50ex]
 {& 0 & 0 & 0 &\\
 0 & P & W & R\oplus S & 0\\
 0 & P & X & Y & 0\\
  &  & Q & Q & 0\\
  &  & 0 & 0 &\\};
 \path[->, densely dashed]  (m-2-3) edge node [auto] {} (m-3-3)
 (m-3-2) edge node [auto]{} (m-3-3)
  (m-3-3) edge node [auto] {} (m-3-4)
   edge node [auto] {} (m-4-3)   ;
\path[->,font=\scriptsize]
  (m-1-2) edge node[auto] {} (m-2-2)
  (m-1-3) edge node [auto] {} (m-2-3)
(m-1-4) edge node [auto] {} (m-2-4)
 (m-2-1) edge node [auto] {} (m-2-2)
  (m-2-2) edge node [auto] {} (m-2-3)
    edge node [auto] {id} (m-3-2)
  (m-2-3) edge node [auto] {} (m-2-4)
   (m-2-4) edge node [auto] {} (m-2-5)
           edge node [auto] {} (m-3-4)
   (m-3-1) edge node [auto] {} (m-3-2)
  (m-3-4) edge node [auto] {} (m-3-5) 
              edge node [auto] {} (m-4-4)   
  (m-4-3) edge node [auto] {id} (m-4-4)
               edge node [auto] {} (m-5-3)       
  (m-4-4) edge node [auto] {} (m-4-5)
              edge node [auto] {} (m-5-4);
\end{tikzpicture}
\end{equation}

Hence it will be enough to show the following claim.
\paragraph{\textbf{ Claim:}}  The diagram \ref{7} exists iff diagram \ref{2} exists.\\
\begin{proof}[\textbf{Proof of the claim:}]
 Proof of the only if part:\\
Assume we have diagram \ref{7}. Combining diagrams \ref{4}, \ref{6} and \ref{7} we get

\begin{equation}
\label{8}
\begin{tikzpicture}
[back line/.style={densely dotted},cross line/.style={loosely dotted}]
\matrix (m) [matrix of math nodes, row sep=2. em,column sep=2.em, text height=1.5ex, text depth=0.50ex]
 {  & 0 &    0            &     &    &     &0 & \\
 0 & P & E              &     &    &     & R & 0\\
 0 & H & E\oplus H&      &    &    &    &  \\
      &     &              &W  &    &     &    &\\
     &    &                &     & X &     &    &\\
     &    &                &     &    & Y  & F & 0\\
 0 & S &                 &     &    & G & Q & 0\\
    & 0 &                 &     &    & 0   & 0 &\\};
\path[->, densely dashed]  (m-2-3) edge [bend left=30] node [auto] {} (m-5-5)
 (m-5-5) edge node [auto]{} (m-6-6)
 (m-3-2) edge[bend right=30] node [auto]{} (m-5-5);
 
\path[->,font=\scriptsize]
  (m-1-2) edge node[auto] {} (m-2-2)
  (m-1-3) edge node [auto] {} (m-2-3)
(m-1-7) edge node [auto] {} (m-2-7)
 (m-2-1) edge node [auto] {} (m-2-2)
  (m-2-2) edge node [auto] {} (m-2-3)
    edge node [auto] {} (m-3-2)
  (m-2-3) edge node [auto] {} (m-2-7)
             edge[auto] node [auto] {} (m-3-3)
   (m-2-7) edge node [auto] {} (m-6-6)
           edge node [auto] {} (m-6-7)
           (m-2-7) edge node [auto] {} (m-2-8)
   (m-3-1) edge node [auto] {} (m-3-2)
  (m-3-2) edge node [auto]{} (m-3-3) 
    edge node [auto]{} (m-7-2) 
   (m-3-3) edge node [auto] {} (m-4-4)
  (m-4-4) edge node [auto] {} (m-5-5) 
               (m-6-6) edge node [auto] {} (m-6-7) 
              edge node [auto] {} (m-7-6) 
               (m-6-7) edge node [auto] {} (m-6-8) 
              edge node [auto] {} (m-7-7)   
  (m-7-1) edge node [auto] {} (m-7-2) 
     (m-7-2) edge node [auto]{}(m-7-6)
               edge node [auto]{}(m-8-2)
               edge node [auto]{}(m-6-6)
  (m-7-7) edge node [auto] {} (m-7-8)
               edge node [auto] {} (m-8-7)       
  (m-7-6) edge node [auto] {} (m-7-7)
              edge node [auto] {} (m-8-6);
\end{tikzpicture}
\end{equation}

which when squashed looks like the diagram \ref{2}. 
\begin{equation}
\label{9}
\begin{tikzpicture}
[back line/.style={densely dotted},cross line/.style={loosely dotted}]
\matrix (m) [matrix of math nodes, row sep=2. em,column sep=2.em, text height=1.5ex, text depth=0.50ex]
 {& 0 & 0 & 0 &\\
 0 & P & E & R & 0\\
 0 & H & X & F & 0\\
 0 & S & G & Q & 0\\
  & 0 & 0 & 0 &\\};
 \path[->, densely dashed]  (m-2-3) edge node [auto] {} (m-3-3)
 (m-3-2) edge node [auto]{} (m-3-3)
  (m-3-3) edge node [auto] {} (m-3-4)
   edge node [auto] {} (m-4-3)   ;
\path[->,font=\scriptsize]
  (m-1-2) edge node[auto] {} (m-2-2)
  (m-1-3) edge node [auto] {} (m-2-3)
(m-1-4) edge node [auto] {} (m-2-4)
 (m-2-1) edge node [auto] {} (m-2-2)
  (m-2-2) edge node [auto] {} (m-2-3)
    edge node [auto] {} (m-3-2)
  (m-2-3) edge node [auto] {} (m-2-4)
   (m-2-4) edge node [auto] {} (m-2-5)
           edge node [auto] {} (m-3-4)
     
   (m-3-1) edge node [auto] {} (m-3-2)
  (m-3-2) edge node [auto]{} (m-4-2) 
  (m-3-4) edge node [auto] {} (m-3-5) 
              edge node [auto] {} (m-4-4)   
  (m-4-1) edge node [auto] {} (m-4-2) 
     (m-4-2) edge node [auto]{}(m-4-3)
               edge node [auto]{}(m-5-2)
  (m-4-3) edge node [auto] {} (m-4-4)
               edge node [auto] {} (m-5-3)       
  (m-4-4) edge node [auto] {} (m-4-5)
              edge node [auto] {} (m-5-4);
\end{tikzpicture}
\end{equation}

 Now we show that the rows and columns in this newly formed diagram \ref{9} are exact. 
 \begin{enumerate}
 \item
Consider the sequence $0\to E\to X\to G\to 0:$
\\
\textbf{Exactness at $E$:}
For $e\in E$ such that $ e\mapsto 0\in X$ then via $X\to Y$ it further goes to $0\in Y$. Let $e\mapsto r\in R.$ Then $r\mapsto 0$ via $R\to Y$. But $R \to Y$ is injective. So $e\mapsto 0$ in $R$. Thus $e$ is in the image of $P\to E$, say image of $p\in P$. Then $p\mapsto 0$ by $P\to E\to X$. But $P\to X$ is injective. Hence $p=0 \in P$. Hence $e=0\in E$. \\
\textbf{Exactness at $G$:}
Further $X\to G$ is surjective as it factors as $X\to Y\to G$ each of which is surjective. \\
\textbf{Exactness at $X$:}
First the composition $E\to X\to G$ is 0 as it factors through $R\to Y\to G$ which is 0. 
Now let $x\in X$ such that $x\mapsto 0$ via $X\to G$. Let $y$ be the image of $x$ via $X\to Y.$ Now $y\mapsto 0$ by $Y\to G$. Therefore there exists $r\in R$ such that $r\mapsto y$ via $R\to Y$. Let $e\in E $ be such that $e\mapsto r$ via $E\to R.$ Let $x'$ be the image of $e$ in $X$. So $x-x'\in X$ such that $x-x'\mapsto 0$ via $X\to Y$. Therefore there exists $p\in P$ such that $p\mapsto x-x'$ via $P\to X.$ Therefore for $Im(p)\in E, Im(p)+e\mapsto x$ via $E\to X$. 
\item
Similarly using other part of diagram \ref{8} one can show that $0\to H\to X\to F\to0$ is exact. 
\end{enumerate}

Next the proof for the if part, assume that the diagram \ref{2} exists. We need to show that $X$ in diagram \ref{2} fits into the diagram \ref{7}.
First we see that the respective maps in diagram \ref{7} exist. Since 

\begin{equation}
\label{10}
\begin{tikzpicture}
[back line/.style={densely dotted},cross line/.style={loosely dotted}]
\matrix (m) [matrix of math nodes, row sep=2 em,column sep=2.em, text height=1.5ex, text depth=0.50ex]
 {X &     &\\
      & Y & G \\
      & F & Q\\
  };
\path[->]  (m-1-1) edge [bend left=30] node [auto] {} (m-2-3)
 (m-1-1) edge[densely dashed] node [auto]{} (m-2-2)
 (m-1-1) edge[bend right=30] node [auto]{} (m-3-2);
\path[->,font=\scriptsize]
  (m-2-2) edge node [auto]{} (m-2-3) 
   edge node [auto]{} (m-3-2) 
   (m-2-3) edge node [auto] {} (m-3-3)
     (m-3-2) edge node [auto]{}(m-3-3);
\end{tikzpicture}
\end{equation}

such that inner square is a pullback digram and the outer square is commutative, hence $\exists$ a map $X\to Y$ such that the diagram \ref{10} commutes. 
Similarly $W$ is a pushforward of a certain diagram and hence $\exists$ a map $W\to X$. So we have all the arrows in diagram \ref{7}. Next to show that all the rows and columns are exact. Enough to show that $0\to P\to X\to Y\to 0$ and $0\to W\to X\to Q\to 0$ are exact.
\begin{enumerate}
\item Consider the sequence $0\to P\to X\to Y\to 0$ : \\
\textbf{Exacteness at P:} It is exact at $P$ as $P\to X$ factors through $P\to E$ and $E\to X$ each of them is injective. \\
\textbf{Exactness at $Y$:} Let $y\in Y$. say $y\mapsto g\in G$. Since $X\to G$ is surjective, $\exists$ $x\in X$ such that $x\mapsto g$. Now Let $x\mapsto y'\in Y$ Then $y-y'\mapsto 0$ via Y$\to G$. Therefore $\exists$ $ e\in E$ such that $e\mapsto r\mapsto y-y'$ via $E\to R\to Y$. Let $Im(e)\in X$. then $x-Im(e)\in X$ such that $x-Im(e)\mapsto y'+y-y'=y\in Y$. Thus $X\to Y$ is surjective.\\
\textbf{Exactness at $X$:} First the composition is 0 as $P\to X\to Y$ factors as $P\to E\to R\to Y$ which is 0. For the other part of the exactness, let $x\in X$ such that $x\mapsto 0$ via $X\to Y.$ Then $x\mapsto 0$ via $X\to G$, hence $\exists$ $e\in E$ such that $e\mapsto x$. Now $e\mapsto 0$ via $E\to R$ as $e\mapsto x\mapsto 0$ via $E\to X\to F$ and $R\to F$ is injective. Therefore $\exists$ $p\in P$ such that $p\mapsto e$. Therefore $p\mapsto x.$ 
\item Now the exactness of the second sequence $0\to W\to X\to Q\to 0$ follows from similar diagram chase.
\end{enumerate}¥ 
\end{proof}

Next we relate the element $\delta_Y([W])$ to the Baer sum in Theorem \ref{1.1}(ii). Assuming the diagram \ref{1} we can form the following diagram :
\begin{equation}
\label{11}
\begin{tikzpicture}
[back line/.style={densely dotted},cross line/.style={loosely dotted}]
\matrix (m) [matrix of math nodes, row sep=3 em,column sep=5.em, text height=1.5ex, text depth=0.50ex]
 {
 Ext^1(R, P)\oplus  Ext^1(S, P)\oplus Ext^1(Q, R)\oplus Ext^1(Q, S) & Ext^2(Q, P)\oplus Ext^2(Q, P)\\
 Ext^1(R\oplus S, P)\oplus Ext^1(Q, R\oplus S) & Ext^2(Q, P)\\
  };

\path[->,font=\scriptsize]
 
 (m-1-1) edge node [auto] {$\phi$} (m-1-2)
  (m-1-1) edge node [auto] {($\nabla_{\ast},\Delta^{\ast}$)} (m-2-1)
   (m-1-2) edge node [auto] {$+$} (m-2-2)
  (m-2-1) edge node [auto] {$\psi$} (m-2-2);
\end{tikzpicture}
\end{equation}

where the maps are defined as follows:
\begin{definition}
Let $[E] \in Ext^1(R, P), [H] \in Ext^1(S, P), [F] \in Ext^1(Q, R), [G] \in Ext^1(Q, S)$. Then
  \begin{center}
  $\phi( [E], [H], [F], [G]):= ([E]\cup [F], [H]\cup [G]),$
  $(\nabla_{\ast},\Delta^{\ast})([E], [H], [F], [G]):= (\nabla_{\ast}([E]\oplus[H]), \Delta^{\ast}([F]\oplus[G]))$ 
 \end{center}
 where $[E]\oplus [H]:= [0\to P\oplus P\to E\oplus H\to R\oplus S\to 0]$
   and\\
    $\nabla_{\ast}([E]\oplus[H])$ is the class of pushforward of the exact sequence 
   \begin{center}
   $0\to P\oplus P\to E\oplus H\to R\oplus S\to 0$ 
   \end{center}
   by the morphism $\nabla: P\oplus P \to P$ given by addition. 
   
$\Delta^{\ast}([F]\oplus[G])$= the class of the pullback of the exact sequence $0\to R\oplus S\to F\oplus G\to Q\oplus Q\to 0$ by the morphism $\Delta : Q\to Q\oplus Q$ given by the diagonal.

If [$W]\in Ext^1(R\oplus S, P)$ and $[Y]\in Ext^1(Q, R\oplus S)$, then 
\begin{center}
$\psi([W], [Y])$:= the class of the exact sequence 
$0\to P\to W\to Y\to Q \to 0.$
\end{center}

Next, for $[0\to P\to T_1\to Z_1\to Q\to 0]$ and $[0\to P\to T_2\to Z_2\to Q\to 0] \in Ext^2(Q, P)$,

$[0\to P\to T_1\to Z_1\to Q\to 0] + [0\to P\to T_2\to Z_2\to Q\to 0]$ is given by their Baer sum as follows:

Let $T$ be the pushforward of $P\to T_1$ and $P\to T_2 , Z $ is the pullback of $Z_1\to Q$ and $Z_2\to Q.$ Then the Baer sum is the class of the exact sequence: $[0\to P\to T\to Z\to Q\to 0]$.
\end{definition}
\begin{lemma}
\label{2.4}
The diagram \ref{11} above is commutative.
\end{lemma}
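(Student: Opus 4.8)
The plan is to evaluate the two ways around the square in diagram~\ref{11} on a general tuple $([E],[H],[F],[G])$ and check that they coincide. The composite ``$\phi$ then $+$'' produces the Baer sum $[E]\cup[F] + [H]\cup[G]$ in $Ext^2(Q,P)$, while the composite ``$(\nabla_*,\Delta^*)$ then $\psi$'' produces $\psi\bigl(\nabla_*([E]\oplus[H]),\,\Delta^*([F]\oplus[G])\bigr)$, which by the definition of $\psi$ is the Yoneda splice $[W]\cup[Y]$ of $W:=\nabla_*([E]\oplus[H])\in Ext^1(R\oplus S,P)$ with $Y:=\Delta^*([F]\oplus[G])\in Ext^1(Q,R\oplus S)$. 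Thus the entire content of the lemma is the single identity
\[
[E]\cup[F] + [H]\cup[G] \;=\; [W]\cup[Y] \qquad\text{in } Ext^2(Q,P).
\]

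To prove this I would assemble three standard facts about the Yoneda product and reduce the right-hand side to the left. First, functoriality: pushing out along a map of the covariant (``$P$'') variable and pulling back along a map of the contravariant (``$Q$'') variable each commute with splicing, and they commute with one another because they act on independent variables of the bifunctor $Ext$. Hence
\[
[W]\cup[Y] \;=\; \nabla_*\bigl([E]\oplus[H]\bigr)\cup\Delta^*\bigl([F]\oplus[G]\bigr) \;=\; \nabla_*\,\Delta^*\Bigl(\bigl([E]\oplus[H]\bigr)\cup\bigl([F]\oplus[G]\bigr)\Bigr),
\]
the product now being taken in $Ext^2(Q\oplus Q,\,P\oplus P)$. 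Second, the splice of two exterior direct sums is the exterior direct sum of the splices, so
\[
\bigl([E]\oplus[H]\bigr)\cup\bigl([F]\oplus[G]\bigr) \;=\; \bigl([E]\cup[F]\bigr)\oplus\bigl([H]\cup[G]\bigr);
\]
indeed the middle map $E\oplus H\to R\oplus S\to F\oplus G$ of the left splice is the direct sum of $E\to R\to F$ and $H\to S\to G$, so the four-term sequence breaks up as the direct sum of $0\to P\to E\to F\to Q\to 0$ and $0\to P\to H\to G\to Q\to 0$. Third, the description of the group law on $Ext^2(Q,P)$ through the diagonal and codiagonal: for $\theta_1,\theta_2\in Ext^2(Q,P)$ one has $\theta_1+\theta_2=\nabla_*\Delta^*(\theta_1\oplus\theta_2)$, and unwinding the pullback of $Z_1\oplus Z_2\to Q\oplus Q$ along $\Delta$ (which yields the fibre product $Z_1\times_Q Z_2$) together with the pushout of $P\oplus P\to T_1\oplus T_2$ along $\nabla$ recovers exactly the $T$ and $Z$ of the Baer-sum definition given above. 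Chaining the three displays gives $[W]\cup[Y]=\nabla_*\Delta^*(\theta_1\oplus\theta_2)=\theta_1+\theta_2$ with $\theta_1=[E]\cup[F]$ and $\theta_2=[H]\cup[G]$, which is the desired identity.

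The routine bookkeeping---the direct-sum splitting and the matching of $\nabla_*\Delta^*(\theta_1\oplus\theta_2)$ with the paper's explicit Baer sum---can be done by inspection, and in the $R$-module model both reduce to writing out elements of the pushout and of the fibre product. The main obstacle is the first step, the functoriality and commutation claim: one must verify carefully that forming $W$ as a pushout along $\nabla$ before splicing gives the same class as splicing first and pushing out afterward, that pulling back along $\Delta$ commutes with the splice, and that the two operations commute with each other. I would establish these either by the universal properties of pushout and pullback (the splice map factors through the relevant universal object in either order, giving a canonical isomorphism of four-term sequences) or, in keeping with the element-chasing style of the rest of the section, by an explicit identification of the middle terms $W$, $Y$ and the spliced objects in the category of $R$-modules. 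Once the commutation is in hand, the lemma follows formally.
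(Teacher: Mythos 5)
Your proof is correct, but it is organized differently from the paper's. You reduce the lemma to the single identity $[E]\cup[F]+[H]\cup[G]=\nabla_{*}([E]\oplus[H])\cup\Delta^{*}([F]\oplus[G])$ and derive it from three standard properties of the Yoneda product: compatibility of the splice with pushforward in the outer covariant variable and with pullback in the outer contravariant variable, additivity of the splice over exterior direct sums, and the description $\theta_1+\theta_2=\nabla_{*}\Delta^{*}(\theta_1\oplus\theta_2)$ of the Baer sum. The paper instead evaluates both composites concretely as four-term sequences $0\to P\to W\to Z\to Q\to 0$ and $0\to P\to T\to Y\to Q\to 0$ (with $T$ the pushout of $P\to E$ and $P\to H$, and $Y$ the pullback of $F\to Q$ and $G\to Q$) and identifies their classes by producing a morphism of extensions $W\to T$, $Z\to Y$ restricting to the identity on $P$ and $Q$, the maps supplied by the universal properties in diagrams \ref{12}--\ref{15}. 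Your route is more conceptual---it exhibits the lemma as an instance of the bilinearity of Yoneda composition---and would generalize without change; its cost is that the functoriality and commutation step you correctly flag as the main obstacle is exactly where the paper's explicit pushout/pullback comparisons reappear, so the two arguments ultimately rest on the same universal-property verifications.
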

\begin{proof}
Let $[E] \in Ext^1(R, P), [H] \in Ext^1(S, P), [F] \in Ext^1(Q, R), [G] \in Ext^1(Q, S)$. Following the top arrow gives us that the image of $([E], [H], [F], [G])$ by the composition is the Baer sum of 
\begin{center}
$o\to P\to E \to F\to Q\to 0$ and $0\to P\to H \to G \to Q \to 0$
\end{center} 
which is given by the following exact sequence $0\to P\to T\to Y\to Q\to 0$
where T is the pushforward of $P\to E$ and $P\to H , Y $ is the pullback of $G\to Q$ and $F\to Q.$
So we have the following diagrams:
\begin{equation}
\label{12}
\begin{tikzpicture}
[back line/.style={densely dotted},cross line/.style={loosely dotted}]
\matrix (m) [matrix of math nodes, row sep=2 em,column sep=2.em, text height=1.5ex, text depth=0.50ex]
 {
 0 & P & E & R & 0\\
 0 & H & T &  R & 0\\
  };

\path[->,font=\scriptsize]
 (m-1-1) edge node [auto] {} (m-1-2)
  (m-1-2) edge node [auto] {} (m-2-2)
   (m-1-2) edge node [auto] {} (m-1-3)
  (m-1-3) edge node [auto] {} (m-1-4)
              edge node [auto] {} (m-2-3)
   (m-1-4) edge node [auto] {} (m-1-5)
           edge node [auto] {id} (m-2-4)
   (m-2-1) edge node [auto] {} (m-2-2)
  (m-2-2) edge node [auto]{} (m-2-3) 
   (m-2-3) edge node [auto] {} (m-2-4)
  (m-2-4) edge node [auto] {} (m-2-5) ;
\end{tikzpicture}
\end{equation}
\begin{equation}
\label{13}
\begin{tikzpicture}
[back line/.style={densely dotted},cross line/.style={loosely dotted}]
\matrix (m) [matrix of math nodes, row sep=2 em,column sep=2.em, text height=1.5ex, text depth=0.50ex]
 {
 0 & S & Y & G & 0\\
 0 & S & F & Q & 0\\
  };

\path[->,font=\scriptsize]
 (m-1-1) edge node [auto] {} (m-1-2)
  (m-1-2) edge node [auto] {id} (m-2-2)
   (m-1-2) edge node [auto] {} (m-1-3)
  (m-1-3) edge node [auto] {} (m-1-4)
              edge node [auto] {} (m-2-3)
   (m-1-4) edge node [auto] {} (m-1-5)
           edge node [auto] {} (m-2-4)
   (m-2-1) edge node [auto] {} (m-2-2)
  (m-2-2) edge node [auto]{} (m-2-3) 
   (m-2-3) edge node [auto] {} (m-2-4)
  (m-2-4) edge node [auto] {} (m-2-5) ;
\end{tikzpicture}
\end{equation}

Following the left arrow in diagram \ref{11}, we get $\nabla_{\ast}([E]\oplus[H])$ which fits into the diagram

\begin{equation}
\label{14}
\begin{tikzpicture}
[back line/.style={densely dotted},cross line/.style={loosely dotted}]
\matrix (m) [matrix of math nodes, row sep=2 em,column sep=2.em, text height=1.5ex, text depth=0.50ex]
 {
 0 & P\oplus P & E\oplus H & R\oplus S & 0\\
 0 & P & W & R\oplus S & 0\\
  };

\path[->,font=\scriptsize]
 (m-1-1) edge node [auto] {} (m-1-2)
  (m-1-2) edge node [auto] {$\nabla$} (m-2-2)
   (m-1-2) edge node [auto] {} (m-1-3)
  (m-1-3) edge node [auto] {} (m-1-4)
              edge node [auto] {} (m-2-3)
   (m-1-4) edge node [auto] {} (m-1-5)
           edge node [auto] {id} (m-2-4)
   (m-2-1) edge node [auto] {} (m-2-2)
  (m-2-2) edge node [auto]{} (m-2-3) 
   (m-2-3) edge node [auto] {} (m-2-4)
  (m-2-4) edge node [auto] {} (m-2-5) ;
\end{tikzpicture}
\end{equation}
and $\Delta^{\ast}([F]\oplus[G])$ fits into 
\begin{equation}
\label{15}
\begin{tikzpicture}
[back line/.style={densely dotted},cross line/.style={loosely dotted}]
\matrix (m) [matrix of math nodes, row sep=2 em,column sep=2.em, text height=1.5ex, text depth=0.50ex]
 {
 0 & R\oplus S & Z & Q & 0\\
 0 & R\oplus S & F\oplus G & Q\oplus Q & 0\\
  };

\path[->,font=\scriptsize]
 (m-1-1) edge node [auto] {} (m-1-2)
  (m-1-2) edge node [auto] {id} (m-2-2)
   (m-1-2) edge node [auto] {} (m-1-3)
  (m-1-3) edge node [auto] {} (m-1-4)
              edge node [auto] {} (m-2-3)
   (m-1-4) edge node [auto] {} (m-1-5)
           edge node [auto] {$\Delta$} (m-2-4)
   (m-2-1) edge node [auto] {} (m-2-2)
  (m-2-2) edge node [auto]{} (m-2-3) 
   (m-2-3) edge node [auto] {} (m-2-4)
  (m-2-4) edge node [auto] {} (m-2-5) ;
\end{tikzpicture}
\end{equation}
So 
$\psi\circ(\nabla_{\ast},\Delta^{\ast})([E], [H], [F], [G])$ = the class of the exact sequence
$0\to P\to W \to Z\to Q\to 0.$
 Thus we have two elements in $Ext^2(Q,P)$: 
 \begin{center}
 $0\to P\to W \to Z\to Q\to 0$
 and\\ $0\to P\to T \to Y\to Q\to 0$
 \end{center}

 We need to show that these two elements are same in $Ext^2(Q, P)$.
 To see this enough to give a commutative diagram as follows:
 \begin{equation}
 \label{16}
\begin{tikzpicture}
[back line/.style={densely dotted},cross line/.style={loosely dotted}]
\matrix (m) [matrix of math nodes, row sep=2 em,column sep=2.em, text height=1.5ex, text depth=0.50ex]
 {
 0 & P & W & Z & Q & 0\\
 0 & P & T & Y & Q & 0\\
  };
 \path[->, densely dashed] (m-1-3) edge node [auto] {} (m-2-3)
                               (m-1-4) edge node [auto] {} (m-2-4);
\path[->,font=\scriptsize]
 (m-1-1) edge node [auto] {} (m-1-2)
  (m-1-2) edge node [auto] {id} (m-2-2)
   (m-1-2) edge node [auto] {} (m-1-3)
  (m-1-3) edge node [auto] {} (m-1-4)
            
   (m-1-4) edge node [auto] {} (m-1-5)
           
           (m-1-5) edge node [auto] {} (m-1-6)
           edge node [auto] {id} (m-2-5)
   (m-2-1) edge node [auto] {} (m-2-2)
  (m-2-2) edge node [auto]{} (m-2-3) 
   (m-2-3) edge node [auto] {} (m-2-4)
  (m-2-4) edge node [auto] {} (m-2-5)
  (m-2-5) edge node [auto] {} (m-2-6) ;
\end{tikzpicture}
\end{equation}
Using the diagram \ref{12} and \ref{14} for the universal properties of $T$ and $W$ we can show that there is a map $W\to T$ such that the diagram commutes. And similarly using the diagrams \ref{13} and \ref{15} for $Y$ and $Z$ can show we can complete the diagram \ref{16}. Hence the proposition follows.
\end{proof}

\begin{proposition}
\label{2.5}
Recall $\delta_Y([W])$ from Proposition \ref{2.1}. Then $\delta_Y([W])$= Baer sum of $[E]\cup [F]$ and $[H]\cup [G]$.
\end{proposition}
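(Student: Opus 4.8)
The plan is to deduce Proposition~\ref{2.5} almost formally from Lemma~\ref{2.4} together with the reinterpretation of $\delta_Y([W])$ as a Yoneda splice that was recorded just before the Claim. Concretely, I would show that chasing $([E],[H],[F],[G])$ around diagram~\ref{11} produces exactly the $2$-extension $0\to P\to W\to Y\to Q\to 0$ on the Baer-sum side, via Lemma~\ref{2.4}, while the connecting-map interpretation identifies the very same $2$-extension with $\delta_Y([W])$.

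First I would record the two identifications that match the vertical map $(\nabla_{\ast},\Delta^{\ast})$ of diagram~\ref{11} with the data defining $\delta_Y([W])$. The first is immediate from the definitions: since $W$ is by construction the pushforward of $0\to P\oplus P\to E\oplus H\to R\oplus S\to 0$ along $\nabla$, we have $\nabla_{\ast}([E]\oplus[H])=[W]$ in $Ext^1(R\oplus S,P)$ (compare diagrams~\ref{6} and~\ref{14}). The second identification is the one step that needs a short argument: I claim $\Delta^{\ast}([F]\oplus[G])=[Y]$ in $Ext^1(Q,R\oplus S)$, where $Y$ is the object built in diagram~\ref{3}. To see this, I would compute the pullback object $(F\oplus G)\times_{Q\oplus Q}Q$ along the diagonal $\Delta\colon Q\to Q\oplus Q$ and observe that it consists of pairs $(f,g)$ with equal image in $Q$, i.e.\ it is the fibre product $F\times_Q G$. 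But this fibre product is precisely the pullback of $0\to R\to F\to Q\to 0$ along the surjection $G\to Q$, which is how $Y$ was defined; the kernel of the projection to $Q$ is $R\oplus S$ in both descriptions, so the two extension classes agree.

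With these identifications in hand, I would apply the commutativity of diagram~\ref{11} (Lemma~\ref{2.4}). Following the left-then-bottom path sends $([E],[H],[F],[G])$ to $\psi\bigl(\nabla_{\ast}([E]\oplus[H]),\Delta^{\ast}([F]\oplus[G])\bigr)=\psi([W],[Y])$, which by the definition of $\psi$ is the class of $0\to P\to W\to Y\to Q\to 0$. Following the top-then-right path sends it to the Baer sum of $[E]\cup[F]$ and $[H]\cup[G]$. Commutativity therefore identifies the Baer sum with $[0\to P\to W\to Y\to Q\to 0]$.

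Finally I would invoke the reinterpretation already established in the text, namely that the connecting homomorphism $\delta_Y$ is Yoneda composition with the extension $[Y]=[0\to R\oplus S\to Y\to Q\to 0]$, so that $\delta_Y([W])=[0\to P\to W\to Y\to Q\to 0]$ as well. Comparing the two expressions yields that $\delta_Y([W])$ equals the Baer sum of $[E]\cup[F]$ and $[H]\cup[G]$, as asserted. I expect the only genuine obstacle to be the verification that $\Delta^{\ast}([F]\oplus[G])$ coincides with $[Y]$; once the two pullback descriptions of $Y$ are reconciled, the remainder is bookkeeping driven by Lemma~\ref{2.4} and the connecting-map interpretation.
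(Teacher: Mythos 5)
Your proposal is correct and takes essentially the same route as the paper: both reduce to Lemma~\ref{2.4} plus the identification of the pullback $Z=(F\oplus G)\times_{Q\oplus Q}Q$ with the object $Y$ of diagram~\ref{3}, and then read off $\delta_Y([W])$ as the Yoneda splice $[0\to P\to W\to Y\to Q\to 0]$. The paper phrases the $Z$-versus-$Y$ step as producing a morphism $Z\to Y$ from the universal property and comparing the two $2$-extensions, while you identify the two fibre products directly; this is the same argument.
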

\begin{proof} From the proof of the Lemma \ref{2.4}. 
$\psi\circ(\nabla_{\ast},\Delta^{\ast})([E], [H], [F], [G])=$ the class of the exact sequence $0\to P\to W \to Z\to Q\to 0,$ where recall that $Z$ is the pullback of $F\oplus G\to Q\oplus Q$ and $Q\xrightarrow{\Delta} Q\oplus Q.$ But $Z\to F\oplus G$ is given by the pair $Z\to F$ and $Z\to G$ such that $Z\to F\to Q$ is same as $Z\to Q$ which is same as $Z\to G\to Q.$ Hence by universal property of pullback for $Y$ as in diagram \ref{3}, there is a morphism $Z\to Y.$ Now $\delta_Y([W])$ is given by the class of $0\to R\oplus S\to Y\to Q\to 0$. Now we can compare the two exact sequences by the following diagram:
 \begin{equation}
 \label{17}
\begin{tikzpicture}
[back line/.style={densely dotted},cross line/.style={loosely dotted}]
\matrix (m) [matrix of math nodes, row sep=2 em,column sep=2.em, text height=1.5ex, text depth=0.50ex]
 {
  0 & R\oplus S & W & Z & Q & 0\\
  0 & R\oplus S & W & Y & Q & 0\\
  };
 \path[->, densely dashed] 
                               ;
\path[->,font=\scriptsize]
 (m-1-1) edge node [auto] {} (m-1-2)
  (m-1-2) edge node [auto] {id} (m-2-2)
   (m-1-2) edge node [auto] {} (m-1-3)
  (m-1-3) edge node [auto] {} (m-1-4)
            (m-1-3) edge node [auto] {} (m-2-3)
   (m-1-4) edge node [auto] {} (m-1-5)
           (m-1-4) edge node [auto] {} (m-2-4)
           (m-1-5) edge node [auto] {} (m-1-6)
           edge node [auto] {id} (m-2-5)
   (m-2-1) edge node [auto] {} (m-2-2)
  (m-2-2) edge node [auto]{} (m-2-3) 
   (m-2-3) edge node [auto] {} (m-2-4)
  (m-2-4) edge node [auto] {} (m-2-5)
  (m-2-5) edge node [auto] {} (m-2-6) ;
\end{tikzpicture}
\end{equation}

  Now Lemma \ref{2.4} implies that $\psi\circ(\nabla_{\ast},\Delta^{\ast})([E], [H], [F], [G])$ is the Baer sum of $[E]\cup [F]$ and $[H]\cup [G]$. Hence the proposition follows.
\end{proof}

\begin{proof}[\textbf{Proof of Theorem \ref{1.1}}]
The equivalence of statements follows from the Propositions \ref{2.1} and \ref{2.5}.
\end{proof}
\begin{remark}[Isomorphism classes of the solution set]
\label{sol}
Note that $\delta_Y([W])=0$ iff there exists $[X]\in Ext^1(Y, P)$ such that $\gamma([X])=[W]$, by the exact sequence~\ref{les}.
Any other $[X']\in Ext^1(Y, P)$ such that $\gamma([X'])=[W]$ would imply that $[X]-[X']\in$ Ker $\gamma.$ Thus the solution set of $[X] \in Ext^1(Y, P)$ such that $\gamma([X])=[W]$ is given by $[X]+$Ker $\gamma$, where $[X]$ is one solution. Further Ker $\gamma=$ Im $\beta$ which is isomorphic to Coker $\alpha.$ 

Let us recall the map $\alpha$. Given $\phi:R\oplus S\to P$, consider the pushforward of the exact sequence $0\to R\oplus S\to Y\to Q\to 0$ by $\phi$, then $\alpha(\phi)= \phi_{\ast}([0\to R\oplus S\to Y\to Q\to 0])$. We observed from diagrams \ref{15} and \ref{17} that $[0\to R\oplus S\to Y\to Q\to 0]=  \Delta^{\ast}([F]\oplus[G])$. Hence 
\begin{center}
$\alpha: Hom(R\oplus S, P)\to Ext^1(Q, P)$ 
\end{center}
is given by $\alpha(\phi)= \phi_{\ast}(\Delta^{\ast}([F]\oplus[G]))$. On the other hand, consider the exact sequence $0\to R\to F\to Q\to 0$. Apply $Hom(-, P)$ we get 
\begin{center}
$\delta_F: Hom (R, P)\to Ext^1(Q, P)$ and similarly $\delta_G: Hom (S, P)\to Ext^1(Q, P)$ 
\end{center}
two connecting homomorphisms, Then 
\begin{center}$\alpha(f+g)=(f+g)_{\ast}(\Delta^{\ast}([F]\oplus[G]))$ = Baer sum of  $\delta_F(f)$ and $\delta_G(g).$ 
\end{center}
From above discussion, we can observe that the Set of $[X]\in Ext^1(Y, P)$ such that $X$ fits into the diagram \ref{2} is a principal homogeneous space under the abelian group $\dfrac{Ext^1(Q, P)}{Im (\delta_F+\delta_G)}$, where it acts on the solution set as follows:
Given $\overline{\lambda}\in \dfrac{Ext^1(Q, P)}{Im (\delta_F+\delta_G)}$ and $X$ in diagram \ref{2}, hence $[X]\in Ext^1(Y, P)$,
 $\overline{\lambda}[X]:= \beta(\lambda)+[X].$\\
 In particular if $\alpha$ is surjective, then there is a unique solution $X$ for diagram \ref{1} up to isomorphism, if it exists. 
\end{remark}

\end{section}


\bibliographystyle{}

\begin{thebibliography}{2}
\bibitem [SGA VII] {SGA 7} Seminaire de Geometrie Algebrique du Bois-Marie 1967-1969. (SGA 7 I)
\end{thebibliography}

\medskip
\medskip

Rakesh Pawar, \textsc{School of Mathematics,
  Tata Institute of Fundamental Research, Mumbai,
  Mumbai - 400 005, India.
  }\par\nopagebreak
  \textit{E-mail address}: \texttt{Email: rpawar@math.tifr.res.in}

\end{document}